\newtheorem{thm}{Theorem}
\newtheorem{prop}[thm]{Proposition}
\newtheorem{lemma}[thm]{Lemma}
\newtheorem{exam}[thm]{Example}
\renewcommand{\Im}{\operatorname{\rm{Im}}}
\renewcommand{\Re}{\operatorname{\rm{Re}}}
\def\B{\mathbb B}
\def\C{\mathbb C}
\def\R{\mathbb R}
\def\D{\mathbb D}
\def\L{\mathcal L}
\def\d{\delta}
\def\z{\zeta}
\begin{document}

\title[Growth of Sibony metric and Bergman kernel]
{Growth of Sibony metric and Bergman \\
kernel for domains with low regularity}

\author{Nikolai Nikolov and Pascal J.~Thomas}

\address{N. Nikolov\\
Institute of Mathematics and Informatics\\
Bulgarian Academy of Sciences\\
Acad. G. Bonchev Str., Block 8\\
1113 Sofia, Bulgaria}
\email{nik@math.bas.bg}
\address{Faculty of Information Sciences\\
State University of Library Studies
and Information Technologies\\
69A, Shipchenski prohod Str.\\
1574 Sofia, Bulgaria}

\address{P.J.~Thomas\\
Institut de Math\'ematiques de Toulouse; UMR5219 \\
Universit\'e de Toulouse; CNRS \\
UPS, F-31062 Toulouse Cedex 9, France} \email{pascal.thomas@math.univ-toulouse.fr}

\begin{abstract} It is shown that even a weak multidimensional
Suita conjecture fails for any bounded non-pseudoconvex domain with
$\mathcal C^1$ boundary: the product of the Bergman kernel by the volume of the
indicatrix of the Azukawa metric is not bounded below. This is obtained by finding a direction along
which the Sibony metric tends to infinity as the base point tends to the boundary.
The analogous statement fails for a Lipschitz boundary. For a general $\mathcal C^1$
boundary, we give estimates for the Sibony metric in terms of some directional
distance functions.
For bounded pseudoconvex domains, the Blocki-Zwonek Suita-type theorem
implies growth to infinity of the Bergman kernel; the fact that the Bergman kernel grows as the square of the reciprocal
of the distance to the boundary, proved by S. Fu in the $\mathcal C^2$ case, is
extended to bounded pseudoconvex domains with Lipschitz boun\-daries.
\end{abstract}

\thanks{The first named author is partially supported by the National Science Fund,
Bulgaria under contract DN 12/2.}

\subjclass[2020]{32F45}

\keywords{Suita conjecture, Bergman kernel, Azukawa and Sibony metrics}

\maketitle

\section{Results}
\label{main}
Let $D$ be a domain in $\C^n,$ $z\in D,$ $X\in\C^n.$ Define the Bergman kernel,
the Azukawa metric, and the Sibony metric, respectively, as follows (see e.g. \cite{JP}):
$$K_D(z):=\sup\{|f(z)|^2:f\in\mathcal O(D),\ ||f||_{L^2(D)}\leq 1\};$$
$$A_D(z;X):=\limsup_{\lambda\to 0}\frac{\exp g_D(z,z+\lambda X)}{|\lambda|},$$
where $g_D(z,w):=\sup\{u(w):u\in\mbox{PSH}(D),\ u<0,\ u<\log||\cdot-z||+C\}$
is the pluricomplex Green function of $D$ with pole at $z;$
$$S_D(z;X):=\sup_v[\L_v(z;X)]^{1/2},$$
where $\L_v$ is the Levi form of $v$, and the supremum is taken over all
functions $v:D\to[0,1)$ such that $v(z)=0$, $\log v$ is plurisubharmonic
on $D$, and $v$ is of class $\mathcal C^2$ near $z.$

Let $M_D\in\{A_D, S_D\}$ and $V^M_D(z)$ be the volume of the indicatrix
$$I^M_D(z):=\{X\in\C^n: M_D(z;X)<1\}.$$

Z. B\l ocki and W. Zwonek \cite[Theorem 2]{BZ} proved the following.

\begin{thm}\label{1} If $D$ is a pseudoconvex domain in $\C^n,$
then $$K_D(z)V^A_D(z)\ge 1,\quad z\in D.\footnote{If $V^A_D(z)=\infty,$
then $K_D(z)=0.$}$$
\end{thm}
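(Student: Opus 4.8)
The plan is to obtain the inequality from a convexity property of the Bergman kernels of the sublevel sets of the pluricomplex Green function $G:=g_D(z,\cdot)$, via Berndtsson's theorem on the plurisubharmonic variation of Bergman kernels. For $t<0$ put $\Omega_t:=\{w\in D:G(w)<t\}$; since $D$ is pseudoconvex we have $G\in\mathrm{PSH}(D)$, so each $\Omega_t$ is a pseudoconvex subdomain containing $z$, and $\Omega_t\uparrow D$ as $t\uparrow0$. (An alternative would be to feed a weight built from $G$ into the sharp $L^2$-extension theorem, but setting up the Berndtsson route seems cleaner.)

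First I would introduce the fibered domain $\widehat D:=\{(\z,w)\in(\D\setminus\{0\})\times D:G(w)<\log|\z|\}$. Its defining function $(\z,w)\mapsto G(w)-\log|\z|$ is plurisubharmonic, so $\widehat D$ is pseudoconvex, and its fibre over $\z$ is exactly $\Omega_{\log|\z|}$ (which contains $z$ for every $\z\in\D\setminus\{0\}$ because $G(z)=-\infty$). Berndtsson's theorem then gives that $\z\mapsto\log K_{\Omega_{\log|\z|}}(z)$ is plurisubharmonic on $\D\setminus\{0\}$; being rotation invariant, it is a convex function of $\log|\z|$. In other words, $\varphi(t):=\log K_{\Omega_t}(z)$ is convex on $(-\infty,0)$.

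Next I would compute the two boundary behaviors of $\varphi$. As $t\uparrow0$ the increasing exhaustion $\Omega_t\uparrow D$ forces $K_{\Omega_t}(z)\downarrow K_D(z)$, so $\varphi(t)\to\log K_D(z)$. As $t\downarrow-\infty$ I would rescale by $\Phi_t(X)=z+e^tX$: the change-of-variables formula for the Bergman kernel gives $e^{2nt}K_{\Omega_t}(z)=K_{\Phi_t^{-1}(\Omega_t)}(0)$, while $\Phi_t^{-1}(\Omega_t)$ converges to the Azukawa indicatrix $I^A_D(z)$. Since $I^A_D(z)$ is balanced (by the homogeneity $A_D(z;\lambda X)=|\lambda|A_D(z;X)$) and pseudoconvex (because $\log A_D(z;\cdot)$ is plurisubharmonic when $D$ is pseudoconvex), expanding an $L^2$ holomorphic function on $I^A_D(z)$ into its homogeneous components — which are mutually orthogonal by rotational symmetry of the domain and of Lebesgue measure — yields $\|f\|^2_{L^2}\ge|f(0)|^2\,V^A_D(z)$ (with equality for $f\equiv1$), whence $K_{I^A_D(z)}(0)=1/V^A_D(z)$. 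Passing to the limit, $\varphi(t)+2nt\to-\log V^A_D(z)$.

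To finish, note that $g(t):=\varphi(t)+2nt$ is convex on $(-\infty,0)$ and has the finite limit $-\log V^A_D(z)$ as $t\to-\infty$; a convex function with a finite limit at $-\infty$ is non-decreasing, so $g(t)\ge-\log V^A_D(z)$ for every $t<0$, and letting $t\uparrow0$ gives $\log K_D(z)\ge-\log V^A_D(z)$, i.e.\ $K_D(z)V^A_D(z)\ge1$. (When $V^A_D(z)=\infty$ the limit above is $-\infty$ and the statement degenerates to the footnote's convention.) I expect the main obstacle to be the rescaling step: proving that $\Phi_t^{-1}(\Omega_t)\to I^A_D(z)$ and that this domain convergence propagates to the Bergman kernels at the centre. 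The first part uses pseudoconvexity of $D$ through the fact that $\exp g_D(z,z+\lambda X)/|\lambda|\to A_D(z;X)$ as $\lambda\to0$ (so the $\limsup$ in the definition of $A_D$ is a genuine limit), and the second follows by trapping $\Phi_t^{-1}(\Omega_t)$ between the dilates $(1-\eps)I^A_D(z)$ and $(1+\eps)I^A_D(z)$ for $-t$ large and using monotonicity of the Bergman kernel together with its behaviour under dilation. Pseudoconvexity of $D$ enters essentially — for $\Omega_t$ and $\widehat D$ to be pseudoconvex, and for $I^A_D(z)$ to be pseudoconvex, hence for the volume formula to hold.
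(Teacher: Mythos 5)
The paper itself contains no proof of Theorem \ref{1}: it is quoted from \cite{BZ} (where it is Theorem 2), so there is nothing internal to compare against. What you have written is, in outline, precisely the B\l ocki--Zwonek argument: Lempert's observation that Berndtsson's theorem on plurisubharmonic variation of Bergman kernels, applied to the Hartogs-type pseudoconvex set $\{(\zeta,w):g_D(z,w)<\log|\zeta|\}$, makes $t\mapsto\log K_{\{g_D(z,\cdot)<t\}}(z)$ convex, after which one identifies the limit of $\log\big(e^{2nt}K_{\{g_D(z,\cdot)<t\}}(z)\big)$ as $t\to-\infty$ with $-\log V^A_D(z)$ and uses convexity plus Ramadanov's theorem at $t\to 0^-$. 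So the proposal is correct in its main lines and follows the route of the cited source.

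Two remarks on where your sketch is rougher than \cite{BZ}. First, the step you flag as the main obstacle is handled there more economically: since constants are competitors, $K_{\Omega_t}(z)\ge 1/\lambda(\Omega_t)$, so monotonicity of $e^{2nt}K_{\Omega_t}(z)$ already gives $K_D(z)\ge \big(e^{-2nt}\lambda(\Omega_t)\big)^{-1}$ for every $t$, and one only needs the volume asymptotics $e^{-2nt}\lambda(\Omega_t)\to V^A_D(z)$, not convergence of the rescaled domains together with their Bergman kernels at the centre. Either way, the input is Zwonek's theorem that for pseudoconvex $D$ the convergence $g_D(z,z+\lambda X)-\log|\lambda|\to\log A_D(z;X)$ is a genuine, locally uniform limit; your two-sided trapping $(1-\varepsilon)I^A_D(z)\subset\Phi_t^{-1}(\Omega_t)\subset(1+\varepsilon)I^A_D(z)$ is only immediate on compact parts of $\C^n$, and the upper inclusion (equivalently, the reverse-Fatou/domination needed for volumes) requires extra care when $I^A_D(z)$ is unbounded or $D$ itself is unbounded; the clean way is to first reduce to bounded pseudoconvex $D$ by exhaustion. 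Second, your closing sentence about the degenerate case does not actually prove the footnote's assertion that $V^A_D(z)=\infty$ forces $K_D(z)=0$ (your convexity argument only yields the vacuous bound $K_D(z)\ge 0$ there); this does not affect the displayed inequality, read as $K_D(z)\ge 1/V^A_D(z)$, but it is not delivered by the sketch.
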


Theorem \ref{1} for $n=1$ is known as the Suita conjecture (see \cite{Sui}).
The first proof of this conjecture was given in \cite{Blo}.

On the other hand, by \cite[Proposition 2]{Nik}, even a weaker version of Theorem 1
fails for bounded non-pseudoconvex domains with $C^{1+\varepsilon}$ boundaries. Our
first aim is to extend this result to $C^1$ boundaries.

\begin{prop}\label{2} Let $D$ is a bounded non-pseudoconvex domain in $\C^n$ with
$C^1$ boundary. Then there exists a sequence $(z_j)_j\subset D$ such that
$$\lim_{j\to\infty}K_D(z_j)V^A_D(z_j)=0.$$
\end{prop}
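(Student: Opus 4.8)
Throughout, $n\ge 2$ (for $n=1$ every domain in $\C$ is pseudoconvex, so the hypothesis is vacuous). The plan is to work entirely near one boundary point and to split the assertion into two halves. Since $D$ is not pseudoconvex it fails to be locally pseudoconvex at some $p\in\partial D$; fix such a $p$ and, after an affine change of coordinates, assume $p=0$ with inner real normal $(1,0,\dots,0)$, so that near $0$ one has $D=\{r<0\}$ for a $\mathcal C^1$ defining function $r$ with $r(0)=0$, $\nabla r(0)=(1,0,\dots,0)$. I will exhibit a sequence $z_j\in D$, $z_j\to 0$, and prove: (A) $\sup_j K_D(z_j)<\infty$; (B) $V^A_D(z_j)\to 0$. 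Together these give $K_D(z_j)V^A_D(z_j)\to 0$.

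For (A): because $D$ is not locally pseudoconvex at $0$, the continuity principle (Kontinuit\"atssatz, applied to analytic discs accumulating at $0$ whose boundaries stay in $D$) shows that every $f\in\mathcal O(D)$ extends holomorphically to a fixed ball $B(0,2r_0)$, and one obtains a quantitative bound $\|\widetilde f\|_{L^2(B(0,2r_0))}\le C\|f\|_{L^2(D)}$ by writing $\widetilde f$ through a Bochner--Martinelli/Hartogs-figure type integral whose integration cycle can be kept inside one fixed compact piece of $D$; this $L^2$-control of the extension is the only non-formal ingredient in (A). Applying the sub-mean-value inequality to the plurisubharmonic function $|\widetilde f|^2$ over small balls centred in $B(0,r_0)\cap D$ gives $K_D(z)\le C'K_{B(0,2r_0)}(z)\le C''$ for $z\in B(0,r_0)\cap D$, hence $\sup_j K_D(z_j)\le C''$ for any $z_j\to 0$.

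For (B): by monotonicity of the Azukawa metric, $A_D(z;X)\ge A_{B(z,\operatorname{diam}D)}(z;X)=|X|/\operatorname{diam}D$ for every $z\in D$, so all the indicatrices $I^A_D(z)$ lie in the fixed ball $\{|X|<\operatorname{diam}D\}$. It therefore suffices to produce unit vectors $X_j$ (which will lie close to the complex normal $N=(1,0,\dots,0)$) and numbers $M_j\to\infty$ with
$$A_D(z_j;X)\ \ge\ c\,M_j\,|\langle X,X_j\rangle|,\qquad X\in\C^n.$$
Indeed, writing $V^A_D(z_j)=\int_{|\omega|=1}\bigl(2n\,A_D(z_j;\omega)^{2n}\bigr)^{-1}d\sigma(\omega)$ and splitting the sphere according to whether $|\langle\omega,X_j\rangle|$ is below or above $1/(c\,M_j\operatorname{diam}D)$, the displayed bound together with $A_D(z_j;\omega)\ge 1/\operatorname{diam}D$ yields $V^A_D(z_j)\le C_n(\operatorname{diam}D)^{2n-2}M_j^{-2}\to 0$. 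To get the displayed bound I pass to the Sibony metric via the elementary comparison $S_D\le A_D$: if $v$ is admissible for $S_D$ at $z$, then $v<1$, $\log v$ is plurisubharmonic, and $v\le C|w-z|^2$ near $z$, so $\tfrac12\log v$ is a competitor in the definition of $g_D(z,\cdot)$, whence $v\le\exp\bigl(2g_D(z,\cdot)\bigr)$; comparing the circular means over $\{z+\lambda X:|\lambda|=\rho\}$ of $v$ (which equals $\rho^2\L_v(z;X)+o(\rho^2)$ since $v(z)=0$, $\nabla v(z)=0$) and of $\exp(2g_D(z,\cdot))$ (which is $\le\rho^2(A_D(z;X)+\varepsilon)^2$ for $\rho$ small) and letting $\rho,\varepsilon\to 0$ gives $\L_v(z;X)\le A_D(z;X)^2$. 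So it is enough to build, for each $j$, a function $v_j$ admissible for $S_D$ at $z_j$ with $\L_{v_j}(z_j;X)\ge (c\,M_j)^2|\langle X,X_j\rangle|^2$ for all $X$.

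The construction of these $v_j$ is the heart of the matter and, I expect, the \emph{main obstacle}. The mechanism is that near a non-locally-pseudoconvex $\mathcal C^1$ point one can choose $z_j\to 0$ in $D$ for which the directional distance $\delta_j:=d_D(z_j;X_j):=\sup\{t>0:z_j+\lambda X_j\in D,\ |\lambda|<t\}$ in a suitable near-normal direction $X_j$ tends to $0$, while $D$ still extends past $0$ as in (A); this mismatch is precisely the failure of $-\log d_D$ to be plurisubharmonic, made quantitative through directional distance functions. One then seeks $v_j(w)=\bigl|\langle w-z_j,X_j\rangle+(\text{holomorphic correction})\bigr|^2\,e^{\psi_j(w)}$ with $\psi_j$ plurisubharmonic on $D$, chosen as large as possible at $z_j$ subject to $v_j<1$ on $D$. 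Since $v_j<1$ forces $\psi_j$ to lie, near $z_j$, below $-2\log|\langle w-z_j,X_j\rangle|$, the sub-mean-value inequality over spheres in $D$ centred at $z_j$ — whose radii are only of the size of $\delta_j$ — shows $\psi_j(z_j)$ is \emph{permitted} to be as large as a constant times $\log\delta_j^{-2}$, and then $\L_{v_j}(z_j;X)=e^{\psi_j(z_j)}|\langle X,X_j\rangle|^2$ is of order $\delta_j^{-2}|\langle X,X_j\rangle|^2$, i.e.\ $M_j$ is of order $\delta_j^{-1}\to\infty$. The real work is not this bookkeeping but actually exhibiting such a plurisubharmonic $\psi_j$ globally on $D$ — equivalently, producing a bounded-above plurisubharmonic function on $D$ that is concentrated near $z_j$ and does \emph{not} extend across $0$, although, as used in (A), bounded holomorphic functions on $D$ do extend. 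It is exactly here that the failure of pseudoconvexity is used in an essential, non-formal way, and where estimates for $S_D$ in terms of directional distance functions must be established and brought to bear.
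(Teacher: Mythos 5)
Your reduction is the same as the paper's: boundedness of $K_D$ near a boundary point where pseudoconvexity fails, plus $S_D\le A_D$ and the boundedness of the indicatrix, so that it suffices to make the Sibony metric blow up in a (near-)normal direction; the volume bookkeeping and the comparison $S_D\le A_D$ are fine. But the step you yourself flag as ``the heart of the matter and the main obstacle'' --- actually producing, for each $z_j$, an admissible function $v_j$ with large Levi form at $z_j$ in the normal direction --- is exactly the content of the paper's Proposition \ref{3} and Lemma \ref{finv}, and you do not supply it: what you give is a heuristic (a sub-mean-value ``permission'' for $\psi_j(z_j)$ to be of size $\log\delta_j^{-2}$), not a construction. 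So the proposal has a genuine gap at the decisive point.

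Moreover, the way you frame the missing step points in the wrong direction. You expect the failure of pseudoconvexity to be used ``in an essential, non-formal way'' to build the plurisubharmonic weight $\psi_j$, i.e.\ a bounded psh function on $D$ not extending across $0$. In the paper, non-pseudoconvexity is used \emph{only} to get $\limsup_{z\to p}K_D(z)<\infty$; the Sibony blow-up is a purely local statement valid at \emph{every} $\mathcal C^1$ boundary point of any bounded domain. After localization one encloses $D$ in a model domain $\tilde D=\{|z_1|<1,\ \|z'\|<1,\ x_1<f(y_1,\|z'\|)\}$ with $f(y,r)=o(|y|+r)$, and the admissible function is completely explicit: $u_\delta=\max\bigl(c_\delta(|\Phi(z_1)|^2+\|z'\|^2),\ \|z'\|^{2(1+\gamma)}\bigr)$, where $\Phi$ is a square-root map of the slit region composed with a M\"obius map sending $-\delta$ to $0$ (so $|\Phi'(-\delta)|\sim 1/\delta$), and the outer maximum with $\|z'\|^{2(1+\gamma)}$ localizes the construction in $z'$ up to radius $f^{-1}(\delta)$, which fixes $c_\delta\sim f^{-1}(\delta)^{2(1+\gamma)}$. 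This yields $S_{\tilde D}(q_\delta,(1,b'))\gtrsim f^{-1}(\delta)/\delta$, which tends to infinity precisely because the $\mathcal C^1$ boundary is flatter than any cone ($f(t)=o(t)$); note that the correct rate is $f^{-1}(\delta)/\delta$, not the $\delta_j^{-1}$ your heuristic predicts (for, e.g., a quadratic model it is only $\delta^{-1/2}$), though any rate tending to infinity suffices for the volume to go to $0$. A secondary, more minor point: your part (A), including the $L^2$ control of the Hartogs-type extension at a non-pseudoconvex $\mathcal C^1$ boundary point, is only sketched, but this matches the level of detail of the paper, which simply invokes $\limsup_{z\to p}K_D(z)<\infty$ there.
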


Since $D$ is non-pseudoconvex and $\partial D$ is of class $\mathcal C^1$,
there exists a point $p\in\partial D$ such that
$$\limsup_{z\to p} K_D(z)<\infty.$$

On the other hand, since $S_D\le A_D,$ then $I^A_D\subset I^S_D$ and hence $V^A_D \le V^S_D.$
So, Proposition \ref{2} will be a consequence of the following.

\begin{prop}\label{3} Let $D$ be a bounded domain $D$ in $\C^n$ with
$C^1$ boundary near $p\in\partial D.$ Then
$$\lim_{z\to p}V^S_D(z)=0.$$
\end{prop}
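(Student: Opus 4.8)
Normalise coordinates so that $p=0$ and the inner complex normal to $\partial D$ at $p$ is $e_n=(0,\dots,0,1)$, and write $z=(z',z_n)$ with $z'\in\C^{n-1}$. The plan is to show that, while $I^S_D(z)$ stays inside a fixed ball, it becomes arbitrarily thin in the $z_n$-direction as $z\to p$, which forces its volume to $0$. The fixed ball is easy: for $1\le k\le n$ the map $f_k(w)=(w_k-z_k)/(2\operatorname{diam}D)$ sends $D$ into $\D$ and vanishes at $z$, so $|f_k|^2$ is admissible in the definition of $S_D(z;\cdot)$ and gives $\mathcal L_{|f_k|^2}(z;X)=|X_k|^2/(2\operatorname{diam}D)^2$; taking the largest coordinate, $S_D(z;X)\ge c_0|X|$ with $c_0=(2\sqrt n\,\operatorname{diam}D)^{-1}$, whence $I^S_D(z)\subset\B(0,1/c_0)$ for every $z\in D$. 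Thus it is enough to produce, for an arbitrary sequence $z_j=(z_j',\sigma_j)\to p$ in $D$, admissible functions $v_j$ for $S_D(z_j;\cdot)$ with $\mathcal L_{v_j}(z_j;X)=\lambda_j^2|X_n|^2$ and $\lambda_j\to\infty$: then $S_D(z_j;X)\ge\lambda_j|X_n|$, so $I^S_D(z_j)\subset\{|X'|<1/c_0\}\times\{|X_n|<1/\lambda_j\}$, a set of volume $O(\lambda_j^{-2})\to0$.

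To build $v_j$, recall that as $\partial D$ is $\mathcal C^1$ near $p$ there is, for each $\eps>0$, a radius $\d_\eps>0$, with $\d_\eps\to0$ as $\eps\to0$, such that $D\cap\B(0,\d_\eps)\subset\{\Re z_n>-\eps|z|\}$. I would take
\[
v_j(z)=|z_n-\sigma_j|^2\,e^{u_j(z)},
\]
where $u_j\in\mathrm{PSH}(D)$ is $\mathcal C^2$ near $z_j$ and satisfies $u_j(z)<-2\log|z_n-\sigma_j|$ on $D$. Then $\log v_j=u_j+2\log|z_n-\sigma_j|$ is plurisubharmonic, $0\le v_j<1$, $v_j(z_j)=0$, $v_j$ is $\mathcal C^2$ near $z_j$, and — since $z_n-\sigma_j$ vanishes at $z_j$ — a one-line computation of second derivatives gives $\mathcal L_{v_j}(z_j;X)=e^{u_j(z_j)}|X_n|^2$. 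So everything reduces to finding $u_j\in\mathrm{PSH}(D)$, $\mathcal C^2$ near $z_j$, with $u_j<-2\log|z_n-\sigma_j|$ on $D$ and $u_j(z_j)\to+\infty$ (and then $\lambda_j=e^{u_j(z_j)/2}$).

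For the construction of $u_j$, fix $\eps_j\to0$ with $\d_{\eps_j}>|z_j|$ and put $c_j=\eps_j\d_{\eps_j}\to0$, so that $D\cap\B(0,\d_{\eps_j})\subset H_j:=\{\Re z_n>-c_j\}$. Let $\mu_j:=-2c_j-\overline{\sigma_j}$ be the reflection of $\sigma_j$ in $\partial H_j$. Since $\partial H_j$ is the perpendicular bisector of the segment $[\sigma_j,\mu_j]$, the pluriharmonic function $\ell_j(z):=-2\log|z_n-\mu_j|$ satisfies $\ell_j<-2\log|z_n-\sigma_j|$ on $H_j$ and
\[
\ell_j(z_j)=-2\log\bigl(2(\Re\sigma_j+c_j)\bigr)\longrightarrow+\infty
\]
(indeed $\Re\sigma_j+c_j>0$ since $\Re\sigma_j>-\eps_j|z_j|>-c_j$, and $\Re\sigma_j\to0$, $c_j\to0$). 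The difficulty is that $\ell_j\in\mathrm{PSH}(H_j)$ only, not on all of $D$: extending it involves $1/(z_n-\mu_j)$, and the point $\mu_j$, having small negative real part, may well belong to the $z_n$-projection of $D$. I would therefore paste: choose $m_j\in\mathrm{PSH}(D)$ with $m_j<-2\log|z_n-\sigma_j|$ on $D$, $m_j>\ell_j$ on the collar $D\cap(\B(0,\d_{\eps_j})\setminus\overline{\B(0,\tfrac12\d_{\eps_j})})$, and $m_j(z_j)<\ell_j(z_j)$, and set $u_j=\max(\ell_j,m_j)$ on $D\cap\B(0,\d_{\eps_j})$, $u_j=m_j$ on $D\setminus\overline{\B(0,\tfrac12\d_{\eps_j})}$. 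By the standard pasting lemma, $u_j\in\mathrm{PSH}(D)$; near $z_j$ it coincides with the smooth pluriharmonic $\ell_j$, hence is $\mathcal C^2$ there; it lies below $-2\log|z_n-\sigma_j|$ on $D$; and $u_j(z_j)=\ell_j(z_j)\to+\infty$. (Crucially, such a $\max$-gluing is available for the exponent $u_j$, though it is impossible for $v_j$ itself, which is forced to vanish at $z_j$.)

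The hard part is the existence of the global minorant $m_j$. On the outer part of the collar — where $\Re z_n$ is close to $-c_j$ and $z$ approaches $\partial D$ — the ceiling $-2\log|z_n-\sigma_j|$ and $\ell_j$ are both large and nearly equal (their difference $2\log|z_n-\mu_j|-2\log|z_n-\sigma_j|$ degenerates there), so $m_j$ must be a plurisubharmonic function that grows near that stretch of $\partial D$ while staying below the ceiling everywhere, in particular on $D\setminus\B(0,\d_{\eps_j})$, where the entirely uncontrolled remainder of $\partial D$ supplies the binding constraints. Building such an $m_j$ must exploit the boundedness of $D$ together with the $\mathcal C^1$ geometry at $p$; plausible candidates are an envelope of the form $\log\bigl(\sum_k|G_{j,k}|^2\bigr)+\mathrm{const}$ with $G_{j,k}\in\mathcal O(D)$ adapted near $p$ to the hyperplane $\{z_n=\mu_j\}$ (or to a holomorphic graph hugging $\partial D$ there) and small far from $p$, or a pluricomplex-Green-type extremal function on $D$ with a weight-$2$ logarithmic pole along $\{z_n=\sigma_j\}\cap D$. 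Verifying that some such $m_j$ satisfies the three displayed requirements, and that the pasted $u_j$ may if necessary be regularised near $z_j$ without destroying $u_j(z_j)\to+\infty$, is the technical heart of the matter — and is presumably where a merely $\mathcal C^1$ boundary, as opposed to the $\mathcal C^{1+\eps}$ of \cite{Nik}, makes the construction delicate.
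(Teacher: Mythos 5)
Your reduction is fine (the indicatrix sits in a fixed ball, so it suffices to blow up $S_D(z_j;\cdot)$ in one direction), and the ansatz $v_j=|z_n-\sigma_j|^2e^{u_j}$ with the half-space reflection $\ell_j$ is the right one-variable intuition. But the proof has a genuine gap exactly where you say "the technical heart of the matter" lies: the global minorant $m_j$ is never constructed, and in fact an $m_j$ with your three properties cannot exist in general. If it did, your scheme would give $S_D(z_j;e_n)\ge e^{\ell_j(z_j)/2}=\bigl(2(\Re\sigma_j+c_j)\bigr)^{-1}$, and for a smooth strictly concave boundary point --- say $D$ coincides near $p=0$ with the complement of a ball, so that $D\cap\B(0,\d_\eps)\subset\{\Re z_n>-\eps|z|\}$ forces $\d_\eps\approx 2\eps$ and hence (with the admissible choice $\d_{\eps_j}$ just above $|z_j|$) $c_j\approx\delta_D(z_j)^2$ --- this would mean $S_D(z_j;e_n)\gtrsim\delta_D(z_j)^{-1}$. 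That contradicts the upper bound $S_D\le A_D\le c\,f^{-1}(\delta)/\delta\sim\delta^{-1/2}$ valid for such quadratically concave points (Proposition \ref{modelsib} of the paper, after \cite[Proposition 5]{DNT}). So the requirements you impose on $m_j$ are sometimes mutually inconsistent: the multiplicative ansatz forces the admissible function to vanish to order $2$ along the whole hypersurface $\{z_n=\sigma_j\}$ and to be dominated by $|z_n-\sigma_j|^{-2}$ on all of $D$ (which, moreover, need not be pseudoconvex, so neither holomorphic envelopes $\log\sum_k|G_{j,k}|^2$ nor $\bar\partial$-type tools are available), and this couples the pole at $z_j$ to the uncontrolled far part of $\partial D$ at a rate that is simply too strong.

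The paper's proof sidesteps the global pasting problem instead of solving it. After the same normal-direction reduction and a uniformity argument in the nearest boundary point, it enlarges $D\cap\mathcal U$ to a model domain $\tilde D=\{x_1<f(y_1,\|z'\|)\}$ with $f(y_1,r)=o(|y_1|+r)$ increasing in $r$, and uses monotonicity of $S$ plus the localization lemma \cite[Lemma 5]{FL}. On $\tilde D$ the admissible function is written explicitly (Lemma \ref{finv}): $u_\delta=\max\bigl(c_\delta(|\Phi(z_1)|^2+\|z'\|^2),\,\|z'\|^{2(1+\gamma)}\bigr)$, where $\Phi$ is a conformal map of the slit region in the $z_1$-plane with $\Phi(-\delta)=0$, $|\Phi'(-\delta)|\sim\delta^{-1}$, and the second branch --- a function of $z'$ alone, automatically bounded by $1$ --- takes over for $\|z'\|\ge f^{-1}(\delta)$, so no minorant of $-2\log|z_1+\delta|$ on the rest of $D$ is ever needed. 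The price is the matching constant $c_\delta\approx f^{-1}(\delta)^{2(1+\gamma)}$, so the lower bound obtained is only $S\gtrsim f^{-1}(\delta)/\delta$, not $1/\delta$; but since $f(r)=o(r)$ this still tends to infinity, which is all the volume statement needs. To repair your argument you would have to let the gluing threshold depend on the boundary geometry in exactly this way (i.e., replace the fixed hyperplane reflection by a comparison function involving $z'$ and $f^{-1}(\delta)$), which is precisely the paper's construction.
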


The proof will be given in Subsection \ref{pfp3}.

Combining Theorem \ref{1} and Proposition \ref{3}, it follows that
$$\lim_{z\to p}K_D(z)=\infty$$
if, in addition, $D$ is pseudoconvex. Proposition \ref{5} below says more.

On the other hand, Proposition \ref{3} may fail in the Lipschitz case even for $V^A_D.$

\begin{exam}\label{4} Let $D:=\{z\in\C^2:1<|z_1|+|z_2|<2\}.$ Then
$$\limsup_{x\to 1^+}A_D((x,0);X)<\infty$$
uniformly in the unit vectors $X.$ In particular,
$$\liminf_{x\to 1^+}V^A_D((x,0))>0.$$
\end{exam}

The proof follows that of \cite[Proposition 5]{DNT}
(for $S_D,$ see also the proof of \cite[Proposition 2]{FL}.)

\begin{prop}\label{5} Let $D$ be a bounded pseudoconvex domain $D$ in $\C^n$ with
Lipschitz boundary near $p\in\partial D.$ Then
$$\liminf_{z\to p}K_D(z)\d^2_D(z)>0.$$
\end{prop}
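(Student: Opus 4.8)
The plan is to follow Fu's strategy: use pseudoconvexity to reduce, via the Ohsawa--Takegoshi extension theorem, from $D$ to a one--dimensional slice through $z$, and estimate the Bergman kernel there — where, somewhat surprisingly, merely Lipschitz regularity of $\partial D$ suffices. First I would fix a local model: after a unitary affine change of coordinates sending $p$ to $0$ (which does not affect $K_D\cdot\d_D^2$), there are $r_0>0$, $\kappa>0$ and a $\kappa$--Lipschitz function $\phi$ with $\phi(0)=0$ so that, with $U:=B(0,r_0)$,
$$D\cap U=\{w=(w',w_n)\in U:\Re w_n<\phi(w',\Im w_n)\}.$$
For $z=(z',z_n)\in D$ near $p$ put $a_z:=\phi(z',\Im z_n)-\Re z_n>0$ and $\lambda_0(z):=\phi(z',\Im z_n)+i\,\Im z_n$, so that $(z',\lambda_0(z))\in\partial D\cap U$. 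Since Euclidean distance to a Lipschitz graph is comparable to vertical distance to it, one gets, with constants depending only on $\kappa$,
$$c(\kappa)\,a_z\le\d_D(z)\le a_z,\qquad a_z\to0\ \ (z\to p),$$
and, writing $\ell_z:=z+\C e_n\cong\C$ and $G_z:=\{\lambda\in\C:(z',\lambda)\in D\}$ for the planar slice (bounded, hence pseudoconvex), also $\d_{G_z}(z_n)\asymp\d_D(z)$.

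Next I would reduce to one variable. Slicing $D$ by $\{w_1=z_1,\dots,w_k=z_k\}$ for $k=0,\dots,n-1$ gives a chain of pseudoconvex domains from $D$ down to $G_z$ (a slice of a pseudoconvex domain by a complex affine subspace is pseudoconvex). Applying the Ohsawa--Takegoshi theorem once across each hyperplane $\{w_k=z_k\}$ inside the preceding slice, starting from the Bergman--extremal function $f$ of $G_z$ at $z_n$, produces $F\in\mathcal O(D)$ with $F(z)=f(z_n)$ and $\|F\|_{L^2(D)}^2\le\bigl(C_0(\operatorname{diam}D)^2\bigr)^{n-1}\|f\|_{L^2(G_z)}^2$ ($C_0$ absolute, since at each step the slice has width $\le\operatorname{diam}D$ in the relevant variable); hence $K_D(z)\ge\bigl(C_0(\operatorname{diam}D)^2\bigr)^{-(n-1)}K_{G_z}(z_n)$.

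Then I would prove the planar estimate $K_{G_z}(z_n)\gtrsim\d_{G_z}(z_n)^{-2}$, uniformly in $z$ near $p$. Because $\partial D$ is uniformly Lipschitz near $p$, there is $r_1>0$, independent of $z$, such that the part of a small box about $\lambda_0(z)$ lying below the Lipschitz graph $\{\Re\lambda=\phi(z',\Im\lambda)\}$ is a bounded \emph{simply connected} domain $\Omega_z\subset G_z$ with $\lambda_0(z)\in\partial\Omega_z$ and $\d_{\Omega_z}(z_n)=\d_{G_z}(z_n)$. For a bounded simply connected planar domain the Riemann map gives $K_{\Omega_z}(z_n)=(\pi\,r^2)^{-1}$ with $r$ the conformal radius at $z_n$, and Koebe's $\tfrac14$--theorem gives $r\le4\,\d_{\Omega_z}(z_n)$, so $K_{\Omega_z}(z_n)\ge(16\pi)^{-1}\d_{G_z}(z_n)^{-2}$. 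Finally the Bergman--kernel localization principle (a short application of H\"ormander's $L^2$--estimate in $\C$) gives $K_{G_z}(z_n)\ge c_1K_{\Omega_z}(z_n)$, with $c_1>0$ independent of $z$ since the $G_z$ have uniformly bounded diameter and are uniformly Lipschitz near $\lambda_0(z)$. Combining the three steps yields $K_D(z)\,\d_D(z)^2\ge c>0$ for $z$ close enough to $p$, hence $\liminf_{z\to p}K_D(z)\d_D(z)^2>0$.

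The two preliminary steps are routine — the $\operatorname{diam}D$ factor from Ohsawa--Takegoshi is harmless because it is fixed — and the hard part is exactly the planar estimate. For a $C^2$ boundary one can bound the one--dimensional kernel rather directly; for a merely Lipschitz boundary one is forced to pass to a simply connected piece $\Omega_z$ of the slice and lean on the universal comparison of conformal radius with boundary distance, and then to transfer the bound back to $G_z$ by localization. The chief technical burden, then, is to carry through all of Steps 1--3 with constants that remain uniform as $z\to p$, since the slices $G_z$ (and the boundary points $\lambda_0(z)$) move with $z$.
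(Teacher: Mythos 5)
Your reduction to a one--variable slice via iterated Ohsawa--Takegoshi is exactly what the paper does (it cites \cite{OT} to get $K_D\ge c^{\,n-1}K_{D_q}$ with $c$ depending only on $\operatorname{diam}D$), but your treatment of the planar slice is genuinely different. The paper exploits the Lipschitz hypothesis through the uniform \emph{exterior cone} condition: the slice $D_q$ misses a segment of fixed length $r$ starting at distance comparable to $\d_D(q)$ from $q$, so by monotonicity of the planar Bergman kernel under the inclusion $D_q\subset\C\setminus[0,r]$ it suffices to compute $K_{\C\setminus[0,r]}$ explicitly (conformal map onto a punctured disc), giving $K_{D_q}(q)\gtrsim\d_D(q)^{-2}$ with explicit constants tied to the cone aperture. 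You instead keep the graph description, cut out a simply connected piece $\Omega_z$ of the slice under the Lipschitz graph, bound $K_{\Omega_z}$ below by Riemann mapping plus Koebe's quarter theorem, and then transfer the bound to the full slice $G_z$ by a H\"ormander-type localization --- correctly recognizing that plain monotonicity goes the wrong way here, since $\Omega_z\subset G_z$ only gives $K_{G_z}\le K_{\Omega_z}$. Both routes work; the paper's is shorter and more self-contained because the comparison domain $\C\setminus[0,r]$ is fixed and unbounded, so no localization and no uniformity discussion for the slices is needed, while yours uses more standard machinery and avoids any explicit conformal computation at the price of two details you should spell out: the ``box'' must be a rectangle elongated in the $\Re\lambda$ direction relative to the Lipschitz constant $\kappa$ (otherwise the sub-graph piece need not be connected, let alone simply connected), and the localization constant must be checked to be uniform in $z$ (it is, since it depends only on the fixed box, the cutoff, the distance from $z_n$ to $\operatorname{supp}\bar\partial\chi$, and $\operatorname{diam}D$).
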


When the boundary is $C^2,$ Proposition \ref{5} is due to S. Fu
\cite[Theorem, p. 979]{Fu}. The proof for the Lipschitz case is given in Subsection \ref{pfberg}.

Lemma \ref{finv}, in Section \ref{proofs} below, and \cite[Proposition 5]{DNT} lead to the following generalization
of \cite[Theorem 2]{FL} and \cite[Corollary 6]{DNT}.

\begin{prop}
\label{modelsib}
Let $f:\mathbb R_+\to\mathbb R_+$ be a continuous function such that $f\not\equiv 0,$
$f(0)=0,$ and $f(t)/t$ is an increasing function for $t>0.$
Let $D$ be a bounded domain
in $\mathbb C^n$ given by $\Re (z_1)+O(|\Im (z_1)|)<f(||z'||)$
near $0\in\partial D.$ There exists a constant $c>1$ such that if
$\d>0$ is small enough and $X\in\mathbb C^n,$ then
$$
c^{-1}B(\delta;X)\le S_D(q_\delta;X)\le A_D(q_\delta;X)\le cB(\delta;X),
$$
where $q_\delta:=(-\delta,0')$ and
$\displaystyle B(\delta;X):=\frac{f^{-1}(\delta)}{\delta}|X_1|+||X||.$
\end{prop}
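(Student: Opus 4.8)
The middle inequality $S_D\le A_D$ has already been recorded, so I would establish separately the lower bound $S_D(q_\delta;X)\ge c^{-1}B(\delta;X)$ and the upper bound $A_D(q_\delta;X)\le cB(\delta;X)$. Write $e_1=(1,0')$ and note $B(\delta;X)\asymp\max\{\tfrac{f^{-1}(\delta)}{\delta}|X_1|,\|X\|\}$. The $\|X\|$-part of the lower bound is immediate from the competitor $v(w)=\|w-q_\delta\|^2/(\operatorname{diam}D)^2$, which yields $S_D(q_\delta;X)\ge\|X\|/\operatorname{diam}D$; the $\|X\|$-part of the upper bound, and in fact $A_D(q_\delta;(0,X'))\le c\|X'\|$, would follow by comparing $D$ near $0$ with the fixed cylinder $\{|w_1+\delta|<\delta\}\times\{\|w'\|<r_0\}\subset D$ (for a fixed small $r_0>0$; here one uses only $f\ge 0$) together with the product formula for the Azukawa metric. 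Since $S_D(q_\delta;\cdot)$ is a norm --- a supremum of the seminorms $\mathcal L_v(q_\delta;\cdot)^{1/2}$ --- the triangle inequality and a routine case distinction then reduce the whole statement to the two estimates in the single direction $e_1$: $S_D(q_\delta;X_1e_1)\ge c^{-1}\tfrac{f^{-1}(\delta)}{\delta}|X_1|$ and $A_D(q_\delta;X_1e_1)\le c\tfrac{f^{-1}(\delta)}{\delta}|X_1|$.

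The hypothesis that $f(t)/t$ is increasing should enter only through Lemma \ref{finv}, which I would use to identify the geometric quantities produced by the constructions with $f^{-1}(\delta)/\delta$: writing $\mu:=\delta/f^{-1}(\delta)$, one has $\inf_{s>0}\tfrac{\delta+f(s)}{s}\asymp\mu$ (the infimum being essentially attained at $s\asymp f^{-1}(\delta)$), and, since $f(t)\le\mu t$ for $0<t\le f^{-1}(\delta)$ while $f(t)\ge\mu t$ for $t\ge f^{-1}(\delta)$, the domain $D$ contains near $0$ (modulo the $O(|\Im z_1|)$ term) the truncated cone with vertex near $q_\delta$ and opening slope $\mu$, intersected with a fixed small ball about $0$. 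For the upper bound I would then use monotonicity of the Azukawa metric under inclusion, $A_D(q_\delta;\cdot)\le A_\Omega(q_\delta;\cdot)$ with $\Omega$ this truncated cone, together with the bound $A_\Omega(q_\delta;X_1e_1)\le c\,\mu^{-1}|X_1|$ --- which is exactly what \cite[Proposition 5]{DNT} supplies for such a model domain (and which also underlies the proof of Example \ref{4}). For the lower bound I would exhibit, in the same circle of ideas, a competitor for $S_D(q_\delta;e_1)$ of the form $v(w)=|w_1+\delta|^2 e^{\phi(w)}$, where $\phi$ is plurisubharmonic on $D$, of class $\mathcal C^2$ near $q_\delta$, normalized so that $v<1$ on $D$ and built so that $\phi(q_\delta)\asymp 2\log(f^{-1}(\delta)/\delta)$; since $v(q_\delta)=0$, one then computes $\mathcal L_v(q_\delta;e_1)=e^{\phi(q_\delta)}\asymp(f^{-1}(\delta)/\delta)^2$, whence $S_D(q_\delta;e_1)\ge c^{-1}f^{-1}(\delta)/\delta$. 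Feeding the two $e_1$-estimates through $S_D\le A_D$ then closes the chain $c^{-1}B\le S_D\le A_D\le cB$.

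The hard part will be the upper bound on $A_D$. Because $D$ is Levi-concave at $0$, the Azukawa metric there is \emph{strictly} smaller than the Kobayashi--Royden metric: even an analytic disc through $q_\delta$ that bulges into the $z'$-directions in order to gain length in $z_1$ can only be pushed to the weaker rate $\sqrt{f^{-1}(\delta)}/\delta$ in the direction $e_1$, so no disc argument is sharp here. One has to estimate the pluricomplex Green function $g_D(q_\delta,\cdot)$ from above instead, and since it is a supremum over all negative plurisubharmonic candidates with a logarith
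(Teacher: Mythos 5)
Your outline never actually establishes the decisive estimate, the Azukawa upper bound, and the reduction you use to simplify it is not legitimate. The triangle-inequality step works for $S_D(q_\delta;\cdot)$ (a supremum of seminorms), but not for $A_D(q_\delta;\cdot)$: the Azukawa metric is in general not subadditive in $X$ — its indicatrix is balanced and pseudoconvex but need not be convex (for a balanced pseudoconvex $G$ such as $\{|z_1|<1,|z_2|<1,|z_1z_2|<\eps\}$ one has $A_G(0;\cdot)=$ its non-convex Minkowski functional). So the two axis estimates $A_D(q_\delta;X_1e_1)\le c\frac{f^{-1}(\delta)}{\delta}|X_1|$ and $A_D(q_\delta;(0,X'))\le c\|X'\|$ do not combine to give $A_D(q_\delta;X)\le cB(\delta;X)$: a balanced pseudoconvex set can contain both ``axes'' and still contain no product neighbourhood. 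This is exactly the difficulty the paper's proof is designed to bypass: it shows that the Kobayashi indicatrix contains the Hartogs figure made of the tangential ball $\{0\}\times\B^{n-1}$ together with the slanted vectors in $c_1\frac{\delta}{f^{-1}(\delta)}\D\times\partial\B^{n-1}$, realized by explicit affine discs — this is where the hypothesis that $f(t)/t$ increases is used, via $f(|\zeta|)\ge\frac{\delta}{f^{-1}(\delta)}|\zeta|$ for $|\zeta|\ge f^{-1}(\delta)$, cf.\ \eqref{insmall} — and then invokes pseudoconvexity of the Azukawa indicatrix to fill in the whole product $c_1\frac{\delta}{f^{-1}(\delta)}\D\times\B^{n-1}$. (Incidentally, your claim that the hypothesis on $f(t)/t$ enters only through Lemma \ref{finv} is backwards: that lemma needs no such hypothesis — the paper explicitly removes it for the lower bound — it is the upper bound that needs it.) Your alternative route is also defective as written: a ``truncated cone with vertex near $q_\delta$'' of slope $\mu=\delta/f^{-1}(\delta)$ either does not contain $q_\delta$ or is not contained in $D$; since $f(t)\le\mu t$ only for $t\le f^{-1}(\delta)$, the vertex must sit at level $-\sup_t(\mu t-f(t))$, which can be arbitrarily close to $-\delta$, so the constant degenerates. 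The correct inscribed model is rather $\{\Re z_1<\max(0,\mu\|z'\|-\delta)\}$ locally, and for it one still has to prove the full directional Azukawa upper bound, uniformly in $\mu$ — something \cite[Proposition 5]{DNT} is unlikely to supply verbatim for a $\delta$-dependent model (the paper redoes the Hartogs argument instead of citing the statement). Finally, your last paragraph concedes that the upper bound is the hard part, correctly observes that no one-direction disc argument can be sharp, announces a direct upper estimate of the pluricomplex Green function, and then breaks off mid-sentence: the key step is simply absent.

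The lower bound is closer to the mark: $S_D(q_\delta;X)\ge\|X\|/\operatorname{diam}D$ and the case distinction via the seminorm property are fine, and this is essentially how the paper argues, except that the paper gets the normal-direction estimate by applying Lemma \ref{finv} to a model domain of type \eqref{dti} containing $D$ (monotonicity), rather than by a new construction. Your ansatz $v=|w_1+\delta|^2e^{\phi}$ with $\phi$ plurisubharmonic on $D$, $v<1$, and $\phi(q_\delta)\asymp 2\log\bigl(f^{-1}(\delta)/\delta\bigr)$ is only asserted, not built; producing such a candidate globally is precisely the nontrivial content of Lemma \ref{finv}, whose whole difficulty lies in the region $\|z'\|\gtrsim f^{-1}(\delta)$, where the domain bulges past $\{\Re z_1=\delta\}$ and the $z_1$-factor is no longer under control — the lemma resolves this by taking the maximum with $\|z'\|^{2(1+\gamma)}$ and matching the two pieces through the choice of $c_\delta$, then letting $\gamma\to0$. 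You should simply cite the lemma here rather than re-sketch it without the gluing.
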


The hypothesis that $f(t)/t$ is  increasing is a sort of local concavity
of the domain $D$. We can remove this assumption for the lower bound.
Let $D$ be a domain in $\mathbb C^n$ with $C^1$ boundary near $p\in\partial D.$
For $q\in D$ near $p$ choose a point $p_q\in\partial D$ such that
$||q-p_q||=\delta_D(q)$. 
Let $L_q$ be the complex line through $q$ and $p_q$,
which contains the real line normal to $\partial D$ at $p_q$.
Let $H_{q^*}$ be the complex hyperplane through $q^*:=2p_q-q$ which is
orthogonal to $L_q.$ Set $\delta^*_D(q):=\min\left( 1, \delta_{\overline D\cap H_{q^*}}(q^*)\right)$
(the $1$ is there because the hyperplane $H_{q^*}$ could fail to intersect $\overline D$,
 for example, if $D$ is convex).

\begin{prop}
\label{compconv}
Let $D$ be a bounded domain in $\mathbb C^n$ with $C^1$ boundary near $p\in\partial D.$
There exists a constant $c>0$ such that
if $q\in D$ near $p$ and $X\in\mathbb C^n,$ then
$$
cS_D(q;X)\ge\frac{\delta^*_D(q)}{\delta_D(q)}||X_q||+||X||,
$$
where $X_q$ is the orthogonal projection of $X$ onto $L_q$. 
\end{prop}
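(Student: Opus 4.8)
The plan is to reduce Proposition~\ref{compconv} to the model estimate of Proposition~\ref{modelsib} by a localization and comparison argument. Since both sides of the claimed inequality are lower bounds for $S_D$ and $S_D$ decreases under inclusion of domains, it suffices to exhibit, for each $q\in D$ near $p$, an auxiliary domain $D_q\subset D$ for which the Sibony metric (or even just the Azukawa metric, which is no larger) at the relevant point is bounded below by the right-hand side, uniformly in $q$. The natural candidate for $D_q$ is built from the geometry described just before the statement: near $p_q$ the boundary $\partial D$ is a $C^1$ graph over its (real) tangent hyperplane, so after the rigid motion sending $p_q$ to $0$ and $L_q$ to the $z_1$-axis we may assume $D$ contains a set of the form $\{\Re(z_1)+O(|\Im(z_1)|)<f(\|z'\|)\}$ near $0$, where the ``obstruction'' function $f$ comes from the modulus of continuity of the defining function's gradient together with the size $\delta^*_D(q)$ measuring how far $\overline D$ sticks out past the reflected point $q^*$ in the orthogonal hyperplane $H_{q^*}$.

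Concretely, I would take $f(t):=\min(t,\,\tfrac{1}{2}\delta^*_D(q)\cdot(t/r_q))$ for $t\le r_q$ and extended linearly — more precisely, a function of the form $f(t)=a t$ for small $t$ with slope $a\asymp \delta^*_D(q)/r_q$, and then glued to something $O(t)$ — chosen so that (i) $D_q:=\{\Re(z_1)+C|\Im(z_1)|<f(\|z'\|)\}\cap B(0,r_q)$ is contained in $D$, using the $C^1$ regularity to control the $|\Im(z_1)|$ term and the definition of $\delta^*_D(q)$ to guarantee that the ``bump'' reaches the point $q^*$ and hence that $f$ can be taken with the stated slope, and (ii) $f(t)/t$ is nonincreasing (here I would really use $f(t)=\min$ of two linear-in-$t$ terms or just a single linear term, for which $f(t)/t$ is constant, so the monotonicity hypothesis of Proposition~\ref{modelsib} holds trivially, or its concave envelope). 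Apply Proposition~\ref{modelsib} to $D_q$ at $q_\delta=(-\delta,0')$ with $\delta=\delta_D(q)$: since $q$ corresponds to this point (distance to $\partial D_q$ comparable to $\delta_D(q)$), we get $S_{D_q}(q;X)\gtrsim \frac{f^{-1}(\delta_D(q))}{\delta_D(q)}|X_1|+\|X\|$. Because $f$ is (comparable to) linear with slope $\asymp \delta^*_D(q)/r_q$ and $r_q\asymp 1$ for $q$ near $p$, one has $f^{-1}(\delta_D(q))\asymp \delta_D(q)\,r_q/\delta^*_D(q)$, so the coefficient of $|X_1|$ is $\asymp r_q/\delta^*_D(q)$; but wait — this is the reciprocal of what we want, so I must instead arrange $f$ so that its \emph{inverse} has the right slope, i.e. take $f$ with slope $\asymp \delta_D(q)/(\delta^*_D(q))$ near $0$ is wrong too; the correct reading is that the indicatrix in the $z_1$-direction has radius $\delta/f^{-1}(\delta)$, and we want $S_D\gtrsim (\delta^*_D/\delta_D)|X_1|$, i.e. $f^{-1}(\delta_D(q))\asymp \delta_D(q)/\delta^*_D(q)\cdot(\text{something})$ — this bookkeeping is exactly the step that needs care, and I would fix the normalization of $f$ (it should be roughly $f(t)\asymp \delta^*_D(q)\, t$, giving $f^{-1}(\delta)\asymp \delta/\delta^*_D(q)$ hence coefficient $1/\delta^*_D(q)$... still not matching, so in fact $f(t)\asymp t/\delta^*_D(q)$ on the relevant range, i.e. a \emph{steep} obstruction) by testing against the extreme case $D$ convex, where $\delta^*_D=1$ and the bound reduces to $S_D(q;X)\gtrsim \|X\|$, which is automatic and consistent.

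The main obstacle is exactly this last point: correctly identifying, from the $C^1$ boundary and the number $\delta^*_D(q)$, the precise model function $f$ so that the model domain $D_q$ genuinely sits inside $D$ \emph{and} the resulting coefficient $f^{-1}(\delta_D(q))/\delta_D(q)$ from Proposition~\ref{modelsib} matches $\delta^*_D(q)/\delta_D(q)$. The containment $D_q\subset D$ near $0$ is where the $C^1$ hypothesis enters in an essential way: the graph of $\partial D$ over the tangent hyperplane at $p_q$ has a modulus of continuity $\omega$ with $\omega(0^+)=0$, and one must check that $\{\Re(z_1)<f(\|z'\|)-C\omega(\|z'\|+|\Im z_1|)(\|z'\|+|\Im z_1|)\}\cap B(0,r_q)\subset D$; for $q$ ranging over a neighborhood of $p$ this modulus is uniform, which is what makes the constant $c$ uniform. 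A secondary, easier issue is that $\delta^*_D(q)$ is defined via the hyperplane $H_{q^*}$ through the \emph{reflected} point $q^*=2p_q-q$ rather than through $p_q$ itself; one checks $\delta_{\overline D\cap H_{q^*}}(q^*)$ is comparable (up to a factor $2$ and the truncation at $1$) to the largest $\rho$ such that a ball of radius $\rho$ centered at a point of $\partial D\cap H$ on the normal-through-$p_q$ slice lies in $D$, which is precisely the ``reach'' of $D$ in transverse directions that governs how non-degenerate the model bump can be made — and hence the size of the Sibony metric in the normal complex direction $L_q$. I would also invoke Lemma~\ref{finv} (as the authors do for Proposition~\ref{modelsib}) to handle the passage between $f$ and $f^{-1}$ cleanly, and note that only the \emph{lower} bound is claimed, so the concavity/monotonicity hypothesis on $f$ that was needed for the two-sided estimate is harmless here since we are free to replace $f$ by a linear minorant.
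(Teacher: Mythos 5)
Your reduction goes in the wrong direction: the monotonicity of the Sibony metric under inclusion says that for $D_q\subset D$ one has $S_{D_q}(q;X)\ge S_D(q;X)$, so a lower bound for the metric of an \emph{inner} model domain $D_q\subset D$ (whether via Proposition~\ref{modelsib} or Lemma~\ref{finv}) gives no information whatsoever about $S_D$ from below. (The parenthetical ``or even just the Azukawa metric, which is no larger'' is also reversed: $S_D\le A_D$.) The paper's proof runs the other way: it \emph{encloses} $D$ in a larger, rotationally symmetric model domain, writing the boundary near $p_q$ as $\Re z_1<f_0(\Im z_1,z')$, replacing $f_0$ by the rising-sun maximum $f_1(y,r)=\max_{\|w'\|\le r}f_0(y,w')$ and then by $g_\eta=f_1+\eta r^2$ to get a domain $D^\eta\supset D$ of the form \eqref{dti}; Lemma~\ref{finv} (which, unlike Proposition~\ref{modelsib}, requires no concavity-type hypothesis on $f$) then gives $cS_D(q,(1,b'))\ge cS_{D^\eta}(q,(1,b'))\ge g_\eta^{-1}(\delta_D(q))/\delta_D(q)$, and letting $\eta\to0$ one identifies $\lim_\eta g_\eta^{-1}(\delta_D(q))$ with the norm of a point $(\delta_D(q),z'(0))$ on the boundary of the enlarged domain lying in the hyperplane $H_{q^*}$, which by the very definition of $\delta^*$ is at least $\delta^*_D(q)$; the $\|X\|$ term comes from boundedness of $D$, and the two estimates are combined. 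This is exactly the step you could not make come out right: the ``bookkeeping'' you leave unresolved is not a normalization issue but the symptom that an inner comparison domain cannot yield the claimed lower bound.

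The geometric picture behind your construction is also inconsistent with the definition of $\delta^*_D(q)$. If $\delta^*_D(q)>0$ then $q^*\notin\overline D$, so no ``bump'' of a subdomain of $D$ can reach $q^*$; more tellingly, in the convex case $\delta^*_D(q)=1$ the asserted estimate reads $S_D(q;X)\gtrsim \|X_q\|/\delta_D(q)+\|X\|$ — the strongest instance of the proposition, not the trivial bound $\gtrsim\|X\|$ your sanity check reduces it to — and for a convex $D$ no inner model domain of the form $\{\Re z_1+C|\Im z_1|<f(\|z'\|)\}$ with $f>0$ protruding past the tangent hyperplane fits inside $D$ at all. So the proposal fails precisely where the statement has content; the correct mechanism is that a large $\delta^*_D(q)$ forces any enclosing rotationally symmetric model domain to have $f^{-1}(\delta_D(q))\ge\delta^*_D(q)$, which feeds into Lemma~\ref{finv} as above.
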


The proofs of Propositions \ref{modelsib} and \ref{compconv} are given
in Subsections \ref{pfsib} and \ref{pfcomp}.

It would be interesting to know whether Proposition \ref{compconv} is still valid in the case
of a Lipschitz boundary.

\section{Proofs}
\label{proofs}

\subsection{Proof of Proposition \ref{3}.}
\label{pfp3}

By a linear change of coordinates, we may assume that $p=0$ and the outer normal to $\partial D$ at $p$
is $(1,0, \dots, 0)$.
There exists an open polydisk  $\mathcal U$
centered at the origin and included in the unit polydisk so that, with $z_1=:x_1+iy_1$ and $z':=(z_2, \dots , z_n)$,
\begin{equation}
\label{model}
D\cap \mathcal U = \mathcal U \cap \left\{ x_1 < f_0 (y_1, z') \right\},
\end{equation}
where $f_0$ is a real-valued $\mathcal C^1$ function such that
$f_0 (y_1, z') = o( (y_1^2+\|z'\|^2)^{1/2})$ and $f_0 (y_1, z')<1$. By the localization property of the Sibony metric
\cite[Lemma 5]{FL}, it will be enough to prove the property for $D\cap \mathcal U$, and we may restrict the neighborhood
further to reduce ourselves to a model situation.

Since $D$ is bounded, $I^S_D$ is a bounded convex set.
Thus it will be enough to prove that
for any $q$ close enough to $0$, there is a unit vector $X_q$ such that
$\lim_{q \to 0} S_D(q,X_q)=\infty$.
Choose a point $\tilde p \in \partial D$
such that $\|\tilde p -q\|=\d_D(q)$, and $X_q = \|\tilde p -q\|^{-1} (\tilde p -q)$,
which is the outward unit normal at $\tilde p$. We want to have some uniform version of \eqref{model} as $\tilde p$ varies.
Let $\rho (z)= x_1 - f_0 (y_1, z')$ be the (local) defining function of $D$. For $q$ close enough to $0$, $\nabla \rho (\tilde p)$
is close enough to $(1,0)$ so that the signed distance from a point  $z \in \partial D$ to its projection on the tangent hyperplane
$T_{\tilde p} \partial D$ is comparable to
\[
f_0 (y_1, z') - \left( f_0 (\Im \tilde p_1, \tilde p') +
Df_0 (\Im \tilde p_1, \tilde p') \cdot (y_1-\Im \tilde p_1 , z'- \tilde p') \right).
\]
By the Mean Value Theorem, there is some $\theta \in [0,1]$ depending on all the variables involved so that
the above expression equals
\begin{multline*}
\bigg[ Df_0 \big((1-\theta) \Im \tilde p_1 + \theta y_1 ,(1-\theta) \tilde p'+ \theta z'\big)
\\
- Df_0 (\Im \tilde p_1, \tilde p')\bigg]
\cdot (y_1-\Im \tilde p_1 , z'- \tilde p').
\end{multline*}

Since $f_0$ is of class $\mathcal C^1$, its differential is uniformly continuous on any compact set, so there exists some
positive continuous function $f_1 : \R\times \R_+ \rightarrow \R_+$ with $f_1(y,x)=o(|y|+x)$ such that for any $q \in \mathcal U$,
a possibly smaller neighborhood of $0$, if we make a linear change of coordinates so that $\tilde p$ becomes $0$
and the unit outer normal to $\partial D$ at $\tilde p$ becomes $(1,0)$, then the defining function of $\partial D$
becomes $\rho_{\tilde p} (z)=x_1- f_{\tilde p} (y_1, z')$ with $|f_{\tilde p} (y_1, z')| \le f_1 (y_1,\|z'\|)$.
From now on we work with those coordinates.

We can find a function $f=f(y_1,r)$ strictly increasing with respect to $r\in [0,\infty)$
such that $f(y_1,r)=o(|y_1|+r)$ and $D\cap \mathcal U \subset \tilde D$, where
\begin{equation}
\label{dti}
\tilde D:=\left\{ (z_1,z'): |z_1|<1, \|z'\|<1, \mbox{ and } x_1 < f (y_1,\|z'\|) \right\},
\end{equation}
by setting
\[
f(y_1,r):= cr^2 + \max_{0 \le \|z'\| \le r} f_1 (y_1, z'),
\]
with $c>0$ some small constant.

By a slight abuse of notation, let  $f^{-1}$ stand for the inverse function of $r\mapsto f(0,r)$.
Proposition \ref{3} follows from:

\begin{lemma}
\label{finv}
We set, for any $\delta>0$, $q_\delta:=(-\delta,0')$. 
There is a constant $c_1>0$ such that if $\tilde D$ is defined as in \eqref{dti}, with $f(0,0)=0$
and $f$ a continuous function strictly increasing over $\R_+$ in the second variable, then
$$S_{\tilde D}(q_\delta, (1,b')) \ge c_1 \frac{f^{-1}(\delta)}{\delta},\quad b'\in\Bbb C^{n-1}.$$
\end{lemma}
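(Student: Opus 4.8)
The plan is to construct an explicit competitor $v$ in the definition of the Sibony metric $S_{\tilde D}(q_\delta;\cdot)$ whose Levi form at $q_\delta$ is large in the $X_1$-direction. The natural choice is to pull back the standard disk function through a one-variable holomorphic map that exploits the "slab" geometry: near $q_\delta=(-\delta,0')$ the domain $\tilde D$ is trapped between $\{x_1<f(y_1,\|z'\|)\}$ and, crucially, its intersection with $\{z'=0'\}$ is contained in $\{z_1:\Re z_1<f(\Im z_1,0)=0,\ |z_1|<1\}$, a region that — because $f$ is increasing and $f(0,0)=0$ — fits inside a half-disk of radius comparable to $1$. The point $q_\delta$ sits at distance $\approx\delta$ from the "flat" part of this boundary. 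So first I would restrict attention to how much room there is in the $z_1$ slice and, more importantly, control the $z_1$-extent of $\tilde D$ as a function of how far $\|z'\|$ is allowed to grow: if $\|z'\|\le f^{-1}(\delta)$ then $f(y_1,\|z'\|)\lesssim \delta$ (for $y_1$ small), so the whole chunk $\{\|z'\|<f^{-1}(\delta)\}\cap\tilde D$ has $x_1$ bounded by something of size $O(\delta)$.

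The key step is then to write down $v(z):=\psi\bigl(\phi(z)\bigr)$ where $\phi$ maps $\tilde D$ (or the relevant chunk of it) into the unit disk $\D$ with $\phi(q_\delta)=0$, and $\psi(\zeta)=|\zeta|^2$, or more precisely to take $v(z)=|\phi(z)|^2$ directly, so that $\log v$ is plurisubharmonic, $v(q_\delta)=0$, $v<1$, and $v$ is smooth near $q_\delta$. The Levi form of $v=|\phi|^2$ at $q_\delta$ in direction $X$ is $|\phi'(q_\delta)\cdot X|^2$ (since $\phi(q_\delta)=0$ kills the other terms), so $S_{\tilde D}(q_\delta;X)\ge |\phi'(q_\delta)\cdot X|$. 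Hence I need a holomorphic $\phi:\tilde D\to\D$ with $\phi(q_\delta)=0$ and $|\partial\phi/\partial z_1(q_\delta)|\gtrsim f^{-1}(\delta)/\delta$; taking $X=(1,b')$ and noting the $b'$-components of $\phi'(q_\delta)$ contribute a term that is either controlled or can be ignored by choosing $\phi$ to depend only on $z_1$, this gives the bound. The simplest realization: let $\phi$ depend only on $z_1$, $\phi(z_1)=\Phi(z_1)$ where $\Phi$ is a Riemann/explicit map of the region $G:=\{z_1:|z_1|<1,\ \Re z_1<0\}$ (or a slightly shrunk sector/half-disk that still contains the $z_1$-projection of $\tilde D$, which requires $f(y_1,0)=0$ and some care near $|z_1|=1$) onto $\D$ sending $-\delta\mapsto 0$; then $|\Phi'(-\delta)|\asymp 1/\delta$. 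This only gives $1/\delta$, not $f^{-1}(\delta)/\delta$, so the real content is that we must use the transverse room: replace $G$ by the \emph{wider} planar region obtained by pushing $\tilde D$ down onto the $z_1$-plane — because $\tilde D$ contains points with $x_1$ up to $\approx f(0,f^{-1}(\delta))=\delta$ only when $\|z'\|$ is that large, but it contains points with $x_1$ negative and $|z_1|$ up to $1$ for \emph{all} small $z'$.

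Reconsidering, the efficient approach is a two-variable map. Project onto $(z_1,\|z'\|)$: $\tilde D$ maps into $\Omega:=\{(z_1,w):|z_1|<1,\ |w|<1,\ x_1<f(y_1,|w|)\}$ in a way that lets us build $\phi$ from a conformal map of a planar slice that \emph{varies with $w$}. Concretely, fix the competitor $v(z)=|g(z_1+h(z'))|^2$ or similar, engineered so that along $z'=0'$ it behaves like the map of the thin region $\{x_1<0\}$ but the effective "width" in the $z_1$ direction that the map sees is the full unit disk while the "depth" scale near $q_\delta$ is $\delta$ — wait, that is again just $1/\delta$. The genuine gain to $f^{-1}(\delta)/\delta$ must come from rescaling $z'$: set $z'=f^{-1}(\delta)\,\zeta'$; in the new coordinates the domain near $q_\delta$ looks like $\{x_1<\delta\,\tilde f(y_1,\|\zeta'\|)\}$ with $\tilde f$ of size $O(1)$ and the unit ball in $\zeta'$ sitting over it, so after the further rescaling $z_1=\delta\, \zeta_1$ the whole picture is of unit size, $q_\delta$ is at unit distance from the boundary, and $S$ is of order $1$; undoing the two rescalings multiplies the $X_1$-component by $f^{-1}(\delta)/\delta$ via the chain rule applied to the competitor. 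So the steps are: (i) rescale $z_1\mapsto \delta\zeta_1$, $z'\mapsto f^{-1}(\delta)\zeta'$; (ii) check the rescaled domain $\tilde D_\delta$ contains a fixed neighborhood, independent of $\delta$, of the rescaled base point $(-1,0')$, using monotonicity of $f$ and $f(0,0)=0$ to bound $f(y_1,\|z'\|)/\delta$ from above on $\|\zeta'\|\le 1$, $|\zeta_1|\le$ const; (iii) invoke a uniform lower bound $S_{\tilde D_\delta}((-1,0'); (1,b'))\ge c_1$ coming from a fixed competitor on that fixed neighborhood; (iv) transfer back using the biholomorphic invariance of $S_D$ (it is a biholomorphic invariant by construction, being built from $\log$-psh functions) under the linear rescaling, which scales $S$ in the $X_1$ slot by $1/\delta$ and divides by $1/f^{-1}(\delta)$ — net factor $f^{-1}(\delta)/\delta$.

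The main obstacle I anticipate is step (ii): verifying that the rescaled domains contain a \emph{uniform} (in $\delta$) neighborhood of $(-1,0')$ and that this neighborhood does not shrink or get pinched as $\delta\to0$. This is where the hypothesis that $f$ is strictly increasing in $r$ and $f(y_1,r)=o(|y_1|+r)$ (inherited from the construction of $f$ in \eqref{dti}) is essential: increasing monotonicity gives $f(y_1,\|z'\|)\le f(y_1, f^{-1}(\delta))$ when $\|z'\|\le f^{-1}(\delta)$, and then $f(y_1,f^{-1}(\delta)) \le f(0,f^{-1}(\delta)) + [\text{small}] = \delta + o(\delta)$ provided $y_1$ is small, which it is after the $\delta$-rescaling. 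One must handle the constraints $|z_1|<1$, $\|z'\|<1$ carefully since after rescaling these become $|\zeta_1|<1/\delta\to\infty$, $\|\zeta'\|<1/f^{-1}(\delta)\to\infty$, so they impose no restriction on a fixed neighborhood — this is actually favorable. A secondary technical point is making the fixed competitor in step (iii) concrete (e.g. $v(\zeta)=|\Psi(\zeta_1)|^2$ with $\Psi$ a conformal map of $\{\Re\zeta_1<0\}\cap\{\text{bounded}\}$ onto $\D$ fixing $-1\mapsto 0$, extended to ignore $\zeta'$), and checking the direction $(1,b')$: since such $v$ has Levi form $|\Psi'(-1)|^2|X_1|^2$ independent of $b'$, the bound $S\ge c_1|X_1| = c_1$ holds for all $b'$, exactly as stated.
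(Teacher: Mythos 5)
There is a genuine gap, and it sits exactly at the heart of the lemma. A competitor $v$ for the Sibony metric at $q_\delta$ must satisfy $0\le v<1$ and $\log v$ plurisubharmonic on \emph{all} of $\tilde D$, and your plan never produces such a global function. The one-variable map you invoke (a conformal map of a half-disk or of $\{\Re z_1<\delta\}$ sending $-\delta$ to $0$) is not admissible on $\tilde D$: on the slice $\Im z_1=0$ the domain \eqref{dti} contains all points with $x_1<f(0,\|z'\|)$ and $\|z'\|$ up to $1$, hence points with $\Re z_1$ well beyond $\delta$, where such a map is neither defined nor of modulus less than $1$. Your rescaling route does not repair this. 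First, the monotonicity goes the wrong way: showing that the rescaled domain $\tilde D_\delta$ \emph{contains} a fixed neighborhood of $(-1,0')$ only yields an upper bound for $S_{\tilde D_\delta}$ there; for a lower bound you would need $\tilde D_\delta$ to be \emph{contained} in a fixed domain carrying a good competitor, and this fails because with only monotonicity of $f$ (no doubling-type control), $\frac1\delta f(\delta y, f^{-1}(\delta)r)$ is not uniformly bounded for $r>1$, while $\|\zeta'\|$ ranges up to $1/f^{-1}(\delta)\to\infty$. Second, the bookkeeping is off: the anisotropic scaling $z_1=\delta\zeta_1$, $z'=f^{-1}(\delta)\zeta'$ multiplies the first-slot derivative by $1/\delta$, so a uniform bound $S_{\tilde D_\delta}((-1,0');Y)\ge c_1|Y_1|$ would transfer back to $S_{\tilde D}(q_\delta;(1,b'))\ge c_1/\delta$, which is false in general: for $f(0,r)=r^2$, Proposition \ref{modelsib} gives $S_{\tilde D}(q_\delta;(1,0'))\le c\,\delta^{-1/2}$. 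Relatedly, you treat $f^{-1}(\delta)/\delta$ as \emph{stronger} than $1/\delta$, but since $f^{-1}(\delta)\le 1$ it is weaker; the obstacle to the naive one-variable competitor is not that its derivative is too small, but that it is not globally admissible.

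The missing idea, which is what the paper's proof supplies, is a gluing that makes the competitor global at the price of exactly the factor $f^{-1}(\delta)$. One takes $u_\delta=\max\bigl(c_\delta(|\Phi(z_1)|^2+\|z'\|^2),\ \|z'\|^{2(1+\gamma)}\bigr)$ on $\{\|z'\|<f^{-1}(\delta)\}$ — where membership in $\tilde D$ and monotonicity of $f$ keep $\Phi(z_1)$ defined and controlled — and $u_\delta=\|z'\|^{2(1+\gamma)}$ on $\{\|z'\|\ge f^{-1}(\delta)\}$, with $c_\delta\asymp f^{-1}(\delta)^{2(1+\gamma)}$ chosen so the two definitions match across $\{\|z'\|=f^{-1}(\delta)\}$. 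The Levi form at $q_\delta$ is then $c_\delta|\Phi'(-\delta)|^2$ with $|\Phi'(-\delta)|\sim 1/\delta$, giving $f^{-1}(\delta)^{1+\gamma}/\delta$ and, letting $\gamma\to0$, the stated bound; the factor $f^{-1}(\delta)$ appears as a loss forced by global admissibility, not as a gain to be extracted from rescaling.
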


\begin{proof}
The construction is a modification of the proof of \cite[Proposition 3]{FL}.

For any $a\in \R$, let $\varphi(z):= \frac{z-a}{z+a}$. When $a\le 0$, $\varphi$ is holomorphic and bounded
by $1$ in modulus on $\{z\in\C:\Re z<0\}$.  In what follows, the square root of a complex number $z$ is defined
on $\C\setminus \R_-$ with $-\pi<\arg z <\pi$. For $\delta>0$ and $z \in \C\setminus [\delta, \infty)$, we let
$\psi_\delta (z) :=  - (-z+\delta)^{1/2}$. Let $a_\delta:= \psi_\delta (-\delta)= -\delta^{1/2} \sqrt2$,
and $\Phi:= \varphi_{a_\delta} \circ \psi_\delta$. Then $|\Phi(z)| <1$ for $\Re z<\delta$, $\Phi (-\delta)=0$,
$\Phi' (-\delta)= 1/(8\delta)$.

Now we claim  that, for any $\delta>0$ and $\gamma \in (0,1)$,  we can choose a constant $c_\delta $ such that
$u=u_\delta \in A(D,q_\delta)$ where
\begin{eqnarray*}
u_\delta (z_1,z') &:= & \max\left( c_\delta( |\Phi(z_1)|^2 +\|z'\|^2) ,
\|z'\|^{2(1+\gamma)}\right),
\\
 & & \mbox{ for } \|z'\|< f^{-1}(\delta), z \in \tilde D,
\\
u_\delta (z_1,z') &:= & \|z'\|^{2(1+\gamma)},  \mbox{ for } \|z'\|\ge f^{-1}(\delta), z \in \tilde D.
\end{eqnarray*}
We set $c_\delta:=  \frac{f^{-1}(\delta)^{2(1+\gamma)}}{1+f^{-1}(\delta)^2}$.

Clearly, in a neighborhood of $q_\delta$, $u_\delta$ coincides with the first term in the maximum,
so it is locally smooth and, for $\d$ small enough,
\[
\left(\frac{\partial^2 u_\delta}{\partial z_1 \partial \bar z_1}(q_\delta) \right)^{1/2} =
c_\delta^{1/2}\left| \Phi' (-\delta) \right| \ge  \frac{f^{-1}(\delta)^{1+\gamma}}{9 \delta} .
\]

We still need to see that $u_\delta$ is well defined and in the appropriate class. For $\|z'\|< f^{-1}(\delta)$,
if $\Im z_1=0$ and $z\in \tilde D$, then $\Re z_1 < \delta$, so $\Phi(z_1)$ is well defined,
and clearly $0\le u_\delta(z)\le 1$.  In each of the domains where a definition is given,
one easily sees that $\log u_\delta$ is a maximum of
plurisubharmonic functions, so itself plurisubharmonic.

We need to see that both definitions of $u_\delta$ agree in a neighborhood of $\{\|z'\|= f^{-1}(\delta)\}\cap \tilde D$.
It follows from the fact that on that set,
\[
\|z'\|^{2(1+\gamma)}= f^{-1}(\delta)^{2(1+\gamma)} = c_\delta (1 + f^{-1}(\delta)^2)
> c_\delta( |\Phi(z_1)|^2 +\|z'\|^2).
\]

Finally, we obtain $S_{\tilde D}(q_\delta, (1,b')) \ge \frac{f^{-1}(\delta)^{1+\gamma}}{9 \delta}$.
Since this holds for any $\gamma>0$, it follows that
$S_{\tilde D}(q_\delta, (1,b')) \ge \frac{f^{-1}(\delta)}{9 \delta}$.
\end{proof}

\subsection{Proof of Proposition \ref{5}.}
\label{pfberg}

The proof is based on the well-known Ohsawa-Takegoshi
extension theorem which easily reduces the situation to the case $n=1.$

Since $\partial D$ is Lipschitz near $p,$ the uniform exterior cone condition is
satisfied, that is, we may assume that $\Gamma_z:=\{z\}+\Gamma\subset D^c$ for
$z\in\partial D$ near $p,$ where
$\Gamma=\{\z\in\C^n:r>\mbox{Re}(\z_1)>r^{-1}\sqrt{\mbox{Im}^2(\z_1)+||\z'||^2}\},\ r>0.$
For $q$ near $p,$ let $p_q\in\partial D$ be such $||q-p_q||=\d_D(q).$
Set
\[
D_q=\left\{ \zeta \in \C:  (\zeta,q')\in D\right\}.
\]
Denote by $p_{1,q}\in\partial D_q$ and $p_{2,q}\in\partial\Gamma_{p_q}$
the closest points to $q$ on the half-line $q+(\R_+\times \{q'\})$.
Set $\d=||q-p_{1,q}||,$
$\d'=||q-p_{2,q}||$.
Then, since $B(q,\d_D(q))\subset D$ and $D_q \times \{q'\} \cap \Gamma_{p_q} =\emptyset$, a bit of
plane geometry shows that
\begin{equation}\label{a}
\d\le\d'\le \frac{\d_D(q)}{r} \sqrt{1+r^2}.
\end{equation}

We may assume that $p_{1,q}=0,$ $q=(-\d,0')$ and $D_q\subset G_r:=\C\setminus[0,r].$
Using the notations of the proof of Proposition \ref{3},
a conformal mapping from $G_r$ to a punctured unit disk is given by  $\Phi:=\varphi_a \circ \psi \circ \xi$,
where $\xi(z):= \frac{z}{r-z}$, $ \psi(z):= -(-z)^{1/2}$, $a:= \psi \circ \xi(-\d)=\frac{-\d^{1/2}}{(r+\d)^{1/2}}$.
The domain $\Phi(G_r)= \D\setminus \{\frac{-1-a}{-1+a}\}$  has the same Bergman kernel as the disc itself.

The Bergman kernel of $G_r$ is given by $K_{G_r}(z)= |\Phi'(z)|^2 K_{\D}(\Phi(z))$,
where $\D$  stands for the complex unit disc.
Since $\Phi(-\delta)=0$
and $K_{\D}(0)=1/\pi$, we get that
\begin{equation}\label{b}
K_{D_q}(q)\ge K_{G_r}(-\d)=\frac{1}{\pi}\left(\frac{r}{4\d(r+\d)}\right)^2.
\end{equation}

On the other hand, it follows by \cite[Theorem p. 179]{OT} that there exists
a constant $c>0$ depending only on $\mbox{diam}\hskip1pt D$ such that
\begin{equation}\label{c}
K_D\ge c^{n-1}K_{D_q}.
\end{equation}

Now, \eqref{a}, \eqref{b}, and \eqref{c} imply the desired result.
\smallskip

\noindent{\bf Remark.}~Adding an  argument to make the situation uniform, as in the proof
of Proposition \ref{3},
 to the proof of \cite[Proposition 2]{JN},
it follows that if $\partial D$ is of class $\mathcal C^1$ near $p,$ then
$$
\lim_{z\to p}K_{D_q}(z)\d^2_D(z)=\frac{1}{4\pi}\mbox{\ \ \ and hence \ \ }
\liminf_{z\to p}K_D(z)\d^2_D(z)\ge\frac{c^{n-1}}{4\pi}.
$$

\subsection{Proof of Proposition \ref{modelsib}.}
\label{pfsib}

The lower bound  follows easily from  Lemma
\ref{finv} and the boundedness of $D$.

The upper bound is obtained as in the proof of
\cite[Proposition 5]{DNT}:
we use the fact that the indicatrix for the Azukawa metric is pseudoconvex and
contains the indicatrix of the Kobayashi-Royden metric\footnote
{$\kappa_D(z;X)=\inf\{|\alpha|:\exists\varphi\in\mathcal O(\D,D) \hbox{
with }\varphi(0)=z,\alpha\varphi'(0)=X\}.$}. We will show that a
certain Hartogs figure is included in that latter indicatrix.

Let  $\B^d$ stand for the unit ball of $\C^d$.  By a linear change of variables,
we may assume that $\{-\d\} \times \overline\B^{n-1} \subset D$, so for any $X\in \overline\B^{n-1}$,
$\varphi (\zeta):= q_\d + \zeta X$ provides a map  from $\D$ to $D$ with $\varphi'(0)=X$.
If we show that there exists $c_1>0$ such that
any vector $X \in c_1 \frac\d{f^{-1}(\d)} \D \times \partial \B^{n-1}$ can be realized
as the derivative at the origin of a holomorphic map $\varphi$ from $\D$ to $D$ with $\varphi(0)=q_\d$,
then the whole of $c_1 \frac\d{f^{-1}(\d)} \D \times  \B^{n-1}$
will be included in $I^S_D(q_\d).$

Let
\[
\varphi(\zeta):= q_\d + \zeta \left( c_1 \frac\d{f^{-1}(\d)} , X' \right) \mbox{ where } X' \in  \partial \B^{n-1}.
\]
Let $r(z)=\Re z_1 + O(|\Im z_1|) - f(\|z'\|)$ be the defining function of $D$. Then, for $c_1$ well chosen,
\begin{equation}
\label{insmall}
r\left( \varphi(\zeta) \right) \le - \d +  \frac\d{f^{-1}(\d)} |\zeta| - f(  |\zeta| ) \le  - f(  |\zeta| )  \le 0
\end{equation}
when $|\zeta| \le f^{-1}(\d)$.  On the other hand, when $|\zeta| \ge f^{-1}(\d)$, the hypothesis on $f$
implies $f(  |\zeta| ) \ge \frac\d{f^{-1}(\d)} |\zeta|$ and we get $r\left( \varphi(\zeta) \right) \le 0$ again.
\vskip4pt

\subsection{Proof of Proposition  \ref{compconv}.}
\label{pfcomp}

We follow the strategy of the proofs of Proposition \ref{3} and Lemma \ref{finv}.
Take coordinates so that
the complex line through $q$ and $p_q$ be the $z_1$ axis. An argument of uniformity similar to the one
at the beginning of the proof of Proposition \ref{3} shows that we can choose a domain $D_q$ locally
defined by
\[
\Re z_1 < f_0 (\Im z_1, z'),
\]
with $f_0(0,0)=0$, $Df_0 (0,0)=0$,
$D\subset D_q$, and the growth of $f_0$ is uniformly controlled in a neighborhood of the original point $p$.
Let
\begin{equation}
\label{risingsun}
f_1(y,r):= \max_{\|w'\| \le r} f_0(y,w').
\end{equation}
Then $f_1$ is continuous, nonnegative, increasing in $r$. Finally, for any $\eta>0$, let $g_\eta(y,r):=f_1(y,r)+\eta r^2$,
and let $D^\eta$ be the domain defined locally by
\[
\Re z_1 < g_\eta (\Im z_1, \|z'\|).
\]
Note that $\delta_D(q)=\delta_{D^\eta}(q)$.
Let $g_\eta^{-1}$ be the inverse function of $r\mapsto g_\eta(0,r)$. 
 By Lemma \ref{finv},
 \[
 cS_D(q, (1,b'))\ge cS_{D^\eta}(q, (1,b')) \ge \frac{g_\eta^{-1}(\delta_{D}(q))}{\delta_D(q)}.
 \]
Let $z'(\eta)$ be a point such that $g_\eta (0, \|z'(\eta)\|)=\d_D(q)$. Letting $\eta\to 0$, by compactness
there is a cluster point $z'(0)$ such that
\[
\|z'(0)\| = \lim_{\eta\to0} \|z'(\eta)\| = \lim_{\eta\to0} g_\eta^{-1}(\delta_{D}(q)),
\]
so that
\[
cS_D(q, (1,b'))\ge \frac{\|z'(0)\|}{\delta_D(q)},
\]
and
$
f_1(0,\|z'(0)\|) =\lim_{\eta\to0} g_\eta(0,\|z'(\eta)\|)= \delta_D(q),
$
so $(\delta_D(q),z'(0)) \in \partial D_q$ and
by minimality of $\delta_{D_q}^*$ we have
\[
 \|z'(0)\| \ge  \delta_{D_q}^*(q) \ge \delta_D^*(q).
\]
On the other hand, in any direction the Sibony metric is bounded below by the Euclidean norm
because the domain is bounded. Taking the maximum of the estimates, we obtain the desired result.



{}

\end{document}